\algrenewcommand\algorithmicrequire{\textbf{Input:}}
\algrenewcommand\algorithmicensure{\textbf{Output:}}
\def\N				{\mathbb N}
\def\Z				{\mathbb Z}
\def\R				{\mathbb R}
\def\C				{\mathbb C}
\def\Mod			{\mathscr M}
\def\Sum			{\mathsf S}
\def\ind			{\mathbbm 1}
\def\Cont			{\mathscr C}
\def\Lebesgue		{\mathrm L}
\def\PW				{\mathsf{PW}}
\def\e				{\mathrm e}
\def\i				{\mathrm i}
\def\T				{\mathrm T}
\def\bfV			{\mathbf{V}}
\def\bfc			{\mathbf{c}}
\def\bfg			{\mathbf{g}}
\def\bfr			{\mathbf{r}}
\def\bfs			{\mathbf{s}}
\def\Ell			{\mathbb L}
\def\E				{\mathbb E}
\def\A				{\mathcal A}
\def\S				{\mathcal S}
\def\complement		{\mathsf c}
\DeclareMathOperator{\supp}{supp}
\DeclareMathOperator{\sgn}{sgn}
\DeclareMathOperator*{\argmax}{arg\,max}
\DeclareMathOperator*{\argmin}{arg\,min}
\newtheorem{definition}{Definition}
\newtheorem{corollary}{Corollary}
\newtheorem{lemma}{Lemma}
\begin{document}

\title{Orthogonal Matching Pursuit based Reconstruction for Modulo Hysteresis Operators
\thanks{This work was supported by the Deutsche Forschungsgemeinschaft (DFG), project number 530863002.}
}

\author{\IEEEauthorblockN{Matthias Beckmann and J\"{u}rgen Jeschke}
\IEEEauthorblockA{Center for Industrial Mathematics, University of Bremen, Germany\\
research@mbeckmann.de $\bullet$ jjeschke@uni-bremen.de}
}

\maketitle
\thispagestyle{plain}
\pagestyle{plain}

\begin{abstract}
Unlimited sampling provides an acquisition scheme for high dynamic range signals by folding the signal into the dynamic range of the analog-to-digital converter (ADC) using modulo non-linearity prior to sampling to prevent saturation.
Recently, a generalized scheme called modulo hysteresis was introduced to account for hardware non-idealities.
The encoding operator, however, does not guarantee that the output signal is within the dynamic range of the ADC.
To resolve this, we propose a modified modulo hysteresis operator and show identifiability of bandlimited signals from modulo hysteresis samples.
We propose a recovery algorithm based on orthogonal matching pursuit and validate our theoretical results through numerical experiments.
\end{abstract}

\begin{IEEEkeywords}
Unlimited sampling, generalized modulo operator, Shannon sampling theory, orthogonal matching pursuit.
\end{IEEEkeywords}

\section{Introduction}

The well-known Shannon sampling theorem~\cite{Shannon1948} allows for the recovery of a bandlimited function $g$ from discrete samples $\{g(n\T) \mid n \in \Z\}$ with sufficiently small sampling rate $\T > 0$, namely if sampled at or above the Nyquist rate, which is implemented in hardware by analog-to-digital converters~(ADCs).
The reconstruction formula, however, assumes access to perfect samples and any distortion in the data leads to artifacts in the reconstruction.
One particular bottleneck in practice is that ADCs operate at a fixed dynamic range and an input signal exceeding the threshold leads to saturation or clipping and, hence, we suffer a permanent information loss.

To overcome these limitations, the Unlimited Sampling Framework (USF) was introduced in~\cite{Bhandari2017,Bhandari2020,Bhandari2021} along with mathematically backed reconstruction algorithms, where, prior to sampling, the input function is folded into the dynamic range using modulo arithmetic.
Since then several reconstruction approaches have been proposed, e.g.~\cite{Rudresh2018,Romanov2019,Azar2022,Guo2023}, and USF has been extended to various applications like imaging~\cite{Bhandari2020a}, computerized tomography~\cite{Beckmann2022,Beckmann2024} and radar~\cite{Feuillen2023}.
More general encoding operators were considered~in~\cite{Azar2025}.

The above approaches assume an instantaneous fold whenever the input signal $g$ reaches the dynamic threshold, yielding an encoding that operates pointwise, i.e., the output at time $t$ is determined by $g(t)$.
As opposed to this, in~\cite{Florescu2022,Florescu2022a} a generalized modulo encoder is proposed that accounts for non-instantaneous folds due to hardware imperfections.
This yields an encoding operator $\Mod_H$ with memory in the sense that $\Mod_H g(t)$ not only depends on $g(t)$, but on $g(\tau)$ for $\tau \leq t$.
Because of this, $\Mod_H$ is also called {\em modulo hysteresis operator}.

\begin{figure}
\centering
\includegraphics[width=\linewidth]{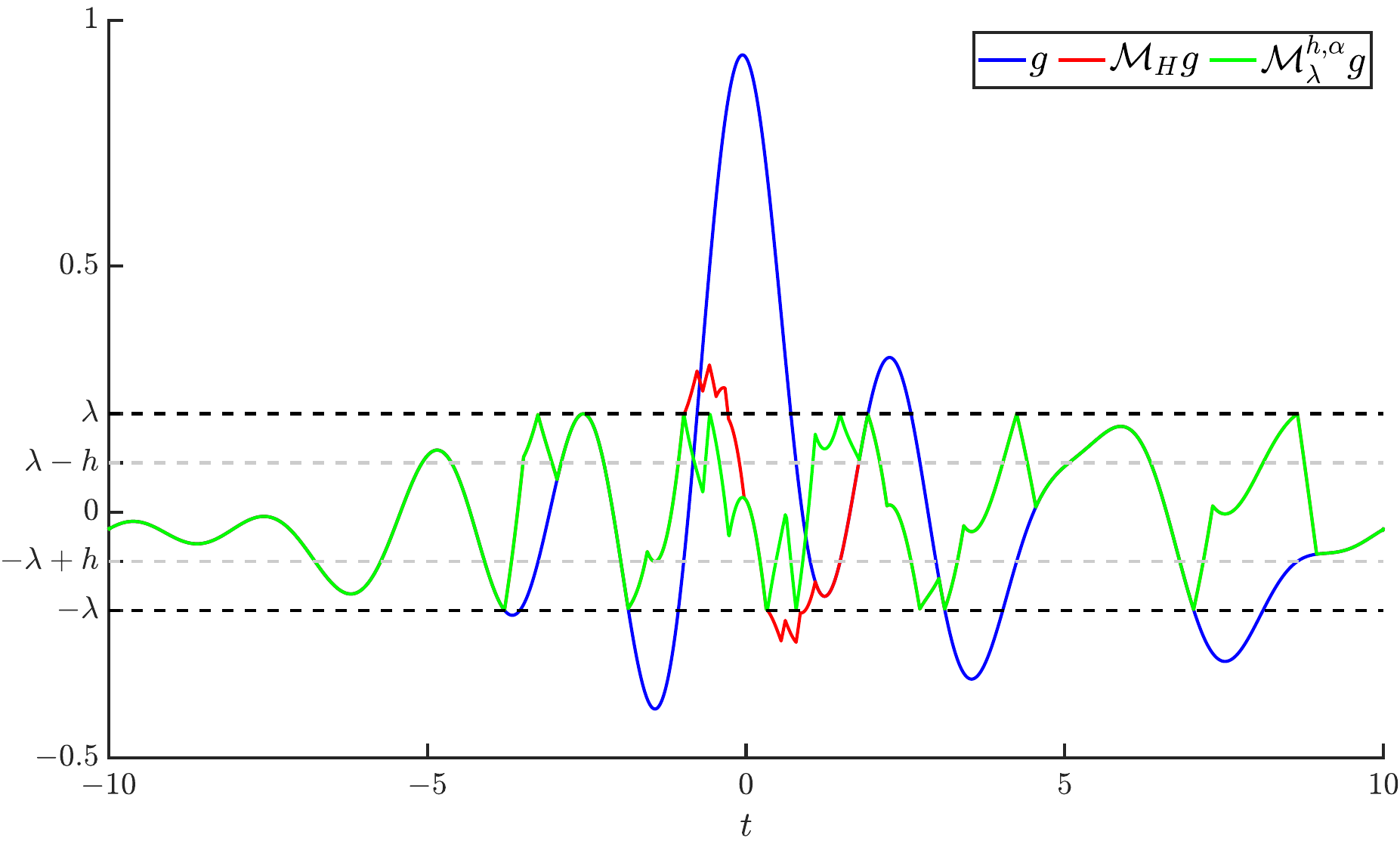}
\caption{Illustration of generalized modulo encoder $\Mod_H$ and modified modulo hysteresis operator $\Mod_\lambda^{h,\alpha}$ for random input $g \in \PW_\Omega$ with $\lambda = 0.2$, $h = 0.1$ and $\alpha = 0.3$, where $\Mod_\lambda^{h,\alpha}g \in [-\lambda,\lambda]$, but $|\Mod_H g(t)| > \lambda$ for some $t$.}
\label{fig:GenMod_example}
\end{figure}

The model proposed in~\cite{Florescu2022,Florescu2022a}, however, does not guarantee that the output is within the dynamic range of the ADC, see Fig.~\ref{fig:GenMod_example} for illustration.
Hence, in this work, we propose a modified modulo hysteresis operator $\Mod_\lambda^{h,\alpha}$ that stays within the given dynamic range.
In Section~\ref{sec:GenMod}, we show that the input signal $g$ is uniquely determined by the discrete samples $\{\Mod_\lambda^{h,\alpha} g(n\T) \mid n \in \Z\}$ with sufficiently small $\T > 0$ and, in Section~\ref{sec:OMP}, explain how $g$ can be recovered using orthogonal matching pursuit (OMP).
Finally, our theoretical results are supported by numerical experiments in Section~\ref{sec:numerics}.

\section{Modulo Hysteresis Operators}\label{sec:GenMod}

For a function $g: \R \to \R$, the ideal modulo encoder $\Mod_\lambda$ with modulo threshold $\lambda > 0$ is pointwisely defined as in~\cite{Bhandari2017} via
\begin{equation*}
\Mod_\lambda g(t) = g(t) - 2\lambda \Bigl\lfloor\frac{g(t) + \lambda}{2\lambda}\Bigr\rfloor
\quad \text{for } t \in \R,
\end{equation*}
where $\lfloor\cdot\rfloor$ denotes the floor function for real numbers, and satisfies $\Mod_\lambda g(t) \in [-\lambda,\lambda]$ for all $t \in \R$.
The typical input space is the Paley-Wiener space $\PW_\Omega$ of square-integrable bandlimited functions with bandwidth $\Omega>0$.
In this case, \cite{Bhandari2020} shows that the input function can be recovered from output samples if the sampling rate satisfies the oversampling condition $\T < \frac{1}{2\Omega\e}$, where the reconstruction is based on iterated forward differences in time domain.
A first hardware validation of a modulo encoder is reported in~\cite{Bhandari2021} together with a Fourier domain recovery approach.

To account for non-ideal hardware implementations, in~\cite{Florescu2022} a generalized modulo encoder $\Mod_{H}$ with parameter triplet $H = (\lambda,h,\alpha)$ is introduced, where the {\em hysteresis} parameter $h \geq 0$ models an imperfect alignment of the reset threshold and the post-reset value and the {\em transient} parameter $\alpha \geq 0$ models an inexact folding transition.
For a function $g$, the output $\Mod_H g$ is defined via a sequence of folding points $(\tau_i)_{i \in \N}$, given by
\begin{gather*}
\tau_1 = \min\bigl\{t > \tau_0 \bigm| \Mod_\lambda (g(t)+ \lambda) = 0\bigr\}, \\
\tau_{i+1} = \min\bigl\{t > \tau_i \bigm| \Mod_\lambda (g(t) - g( \tau_i) + h s_i) = 0 \bigr\}
\end{gather*}
with $s_i = \sgn(g(\tau_i) - g(\tau_{i-1}))$ and $\tau_0 \in \R$.
Then, for $t \in \R$,
\begin{equation*}
\Mod_H g(t) = g(t) - (2\lambda - h) \sum\nolimits_{i \in \N} s_i \, \varepsilon_\alpha(t - \tau_i),
\end{equation*}
where $\varepsilon_0(t) = \ind_{[0,\infty)}(t)$ models an instantaneous folding transition and, for $\alpha > 0$, $\varepsilon_\alpha(t) = \frac{t}{\alpha} \ind_{[0, \alpha)} (t) + \ind_{[\alpha, \infty)}(t)$ models the folding transition as a straight line with slope $\nicefrac{1}{\alpha}$.

Note that the output $\Mod_H g(t)$ depends on all folding points $\tau_j < t$ and, hence, $\Mod_H$ is not acting pointwisely.
Moreover, the above definition does not guarantee that $\Mod_\lambda g(t) \in [-\lambda,\lambda]$ for $t \geq \tau_0$, as illustrated in Fig.~\ref{fig:GenMod_example}.
One reason for this is that the folding points $(\tau_i)_{i \in \N}$ are independent of the transient~$\alpha$.
To account for this, we now wish to propose a modified definition of the modulo hysteresis encoder.
To this end, we rewrite $\Mod_H g$ by defining the function sequence $(\eta_n^{(\alpha)})_{n \in \N_0}$ via
\begin{equation*}
\eta_{n+1}^{(\alpha)} = \eta_n^{(\alpha)} - (2\lambda - h) \sgn(\eta_n^{(\alpha)}(\tau_{n+1})) \, \varepsilon_\alpha(\cdot - \tau_{n+1}),
\end{equation*}
where $\eta_0^{(\alpha)} = g$.
With this, we then obtain
\begin{equation*}
\tau_i = \inf \bigl\{t > \tau_{i-1} \bigm| |\eta_{i-1}^{(0)}(t)| \geq \lambda \bigr\}
\end{equation*}
and
\begin{equation*}
\Mod_H g(t) = \lim_{n \to \infty} \eta_{n}^{(\alpha)}(t).
\end{equation*}
The idea is now to make $\tau_i$ dependent on $\eta_{i-1}^{(\alpha)}$ instead of $\eta_{i-1}^{(0)}$.

\begin{definition}[Modified Modulo Hysteresis]
Let $\lambda > 0$, $h \geq 0$ and $\alpha \geq 0$.
For a function $g: \R \to \R$ and starting point $\tau_0 \in \R$, we define the two sequences $(\kappa_n)_{n \in \N_0}$ and $(\zeta_n)_{n \in \N_0}$ via
\begin{equation*}
\kappa_{n+1} = \inf\bigl\{t>\kappa_n \bigm| |\zeta_{n}(t)| \geq \lambda\bigr\}
\end{equation*}
and
\begin{equation*}
\zeta_{n+1} = \zeta_{n} - (2 \lambda - h) \sgn(\zeta_n(\kappa_{n+1})) \, \varepsilon_\alpha(\cdot - \kappa_{n+1}),
\end{equation*}
where we set $\kappa_0 = \tau_0$ and $\zeta_0 = g$.
With this, we define the {\em modified modulo hysteresis operator} $\Mod_\lambda^{h,\alpha}$ as
\begin{equation*}
\Mod_\lambda^{h,\alpha} g(t) = \lim_{n \to \infty} \zeta_n(t)
\quad \text{for } t \in \R.
\end{equation*}
\end{definition}

When considering the input function space
\begin{equation*}
\Cont_{\lambda,\tau_0}^{0,1} = \bigl\{g \in \Cont^{0,1}(\R) \bigm| |g(t)| < \lambda ~ \forall \, |t| \geq |\tau_0|\bigr\},
\end{equation*}
where $\Cont^{0,1}(\R)$ denotes the space of Lipschitz continuous functions on $\R$, which are differentiable almost everywhere with essentially bounded derivative, one can show that $\Mod_\lambda^{h,\alpha} g$ is well-defined for all $g \in \Cont_{\lambda,\tau_0}^{0,1}$ and, as opposed to $\Mod_H g$, it satisfies $\Mod_\lambda^{h,\alpha} g(t) \in [-\lambda,\lambda]$ for all $t \in \R$ and
\begin{equation*}
\kappa_0 < \kappa_n < \infty
\implies
|\Mod_\lambda^{h,\alpha} g(\kappa_n)| = \begin{cases}
\lambda & \text{if } \alpha > 0,\\
\lambda - h & \text{if } \alpha = 0,
\end{cases}
\end{equation*}
see also Fig.~\ref{fig:GenMod_example} for illustration.
Due to space limitations, we omit the proof in this paper and instead now focus on the identifiability from discrete modulo hysteresis samples
$
\{\Mod_\lambda^{h,\alpha}g(k\T) \mid k \in \Z\}
$
with sampling rate $\T > 0$.
To this end, we consider the space
\begin{equation*}
\Cont^{1,1}_{\lambda,\tau_0} = \bigl\{g \in \Cont^{0,1}_{\lambda,\tau_0} \bigm| g^\prime \in \Cont^{0,1}(\R)\bigr\}
\end{equation*}
and state a separation property of the folding points $(\kappa_n)_{n \in \N_0}$. 

\begin{lemma}
\label{lem:separation}
Let $\alpha > 0$ and $g \in \Cont^{1,1}_{\lambda,\tau_0}$ with $\|g^{\prime\prime}\|_\infty \leq \frac{2 h}{\alpha^2}$.
Then, for $n \in \N_0$ we have
\begin{equation*}
\Mod_\lambda^{h,\alpha} g(\kappa_n) \cdot \Mod^{h,\alpha}_\lambda g(\kappa_{n+1}) < 0
\implies
\kappa_{n+1} - \kappa_n \geq \alpha.
\end{equation*}
\end{lemma}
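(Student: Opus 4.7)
The plan is to argue by contradiction combined with induction on $n$. Assume WLOG that $\zeta_{n-1}(\kappa_n) = \lambda$ (the case $-\lambda$ is symmetric) and suppose $\delta := \kappa_{n+1} - \kappa_n < \alpha$ with $\zeta_n(\kappa_{n+1}) = -\lambda$. First, I would observe that since $\delta < \alpha$, the transient added at $\kappa_n$ is still in its ramp phase on $[\kappa_n, \kappa_n + \delta]$, so that $\zeta_n(t) = \zeta_{n-1}(t) - (2\lambda - h)(t - \kappa_n)/\alpha$; evaluating at $t = \kappa_n + \delta$ and using the two fold values forces the identity
\begin{equation*}
\zeta_{n-1}(\kappa_n + \delta) - \zeta_{n-1}(\kappa_n) = -2\lambda + (2\lambda - h)\delta/\alpha.
\end{equation*}

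Next, I would invoke the induction hypothesis on all earlier pairs $(\kappa_m, \kappa_{m+1})$ with $m < n$ to conclude that two folds lying in the window $(\kappa_n - \alpha, \kappa_n]$ cannot be of opposite sign, since such a pair would need a separation of at least $\alpha$ which does not fit. Chaining through consecutive pairs, every ``active'' fold $\kappa_j$ with $\kappa_n - \kappa_j \in (0, \alpha)$ therefore satisfies $s_j = +1$. Since $\zeta_{n-1}$ reaches $\lambda$ strictly from below at $\kappa_n$, the left Dini derivative satisfies $\zeta_{n-1}'(\kappa_n^-) \geq 0$, and unpacking this via $\zeta_{n-1}'(t) = g'(t) - (2\lambda - h) \sum_{j<n} s_j \varepsilon_\alpha'(t - \kappa_j)$ yields $g'(\kappa_n) \geq (2\lambda - h) N/\alpha$, where $N$ is the number of active indices.

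Then, the hypothesis $\|g''\|_\infty \leq 2h/\alpha^2$ gives a Lipschitz bound on $g'$ which, integrated over $[\kappa_n, \kappa_n + \delta]$, yields $g(\kappa_n + \delta) - g(\kappa_n) \geq g'(\kappa_n)\delta - h\delta^2/\alpha^2$. Substituting into the explicit representation $\zeta_{n-1} = g - (2\lambda - h) \sum_{j<n} s_j \varepsilon_\alpha(\cdot - \kappa_j)$, and using that each active $\varepsilon_\alpha$-increment is at most $\delta/\alpha$ (while inactive ones vanish), the negative contribution $-(2\lambda - h) N \delta/\alpha$ exactly cancels $g'(\kappa_n)\delta$ in the resulting lower bound, leaving
\begin{equation*}
\zeta_{n-1}(\kappa_n + \delta) - \zeta_{n-1}(\kappa_n) \geq -h \delta^2/\alpha^2.
\end{equation*}
Comparing with the earlier identity, $-2\lambda + (2\lambda - h)\delta/\alpha \geq -h \delta^2/\alpha^2$, which rearranges to $(\delta - \alpha)(h\delta + 2\lambda\alpha) \geq 0$ and forces $\delta \geq \alpha$, the desired contradiction.

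I expect the main obstacle to be the bookkeeping for active transients: one must use the induction hypothesis to pin down their signs inside the window $(\kappa_n - \alpha, \kappa_n]$, and then verify that the lower bound on $g'(\kappa_n)$ arising from the left-derivative condition exactly absorbs the negative contribution of the still-active $\varepsilon_\alpha$-ramps, leaving only the clean curvature term $-h\delta^2/\alpha^2$ that drives the factorisation above. The base case $n = 1$ is automatic, since no earlier folds can be active and the argument reduces to a direct Lipschitz estimate on $g$ alone.
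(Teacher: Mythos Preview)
Your argument is correct and follows essentially the same route as the paper: both proofs use (some form of) induction to guarantee that all ramps still active in the window $(\kappa_n-\alpha,\kappa_n)$ carry the same sign, extract the lower bound $g'(\kappa_n)\geq N\,(2\lambda-h)/\alpha$ from the one-sided derivative condition at $\kappa_n$, combine it with the Taylor/Lipschitz estimate coming from $\|g''\|_\infty\leq 2h/\alpha^2$, and arrive at the same quadratic $h x^2+(2\lambda-h)x-2\lambda\geq 0$ in $x=\delta/\alpha$. The only differences are organisational---you run strong induction on $n$ and factor the quadratic directly, whereas the paper inducts along the subsequence of sign-change folds and computes the crossing time---so the two proofs are interchangeable.
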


\begin{proof}
Consider the subsequence $ (\kappa_{m_n})_{n \in \N}$, where $\kappa_{m_1} = \kappa_{1}$ and $\kappa_{m_{n+1}}$ is the next folding point with different sign, i.e.,
\begin{equation*}
\kappa_{m_{n+1}} = \min_{k > m_n} \bigl\{\kappa_k \bigm| \sgn(\zeta_k(\kappa_k)) \neq \sgn(\zeta_{m_n}(\kappa_{m_n}))\bigr\} 
\end{equation*}
so that $\Mod_\lambda^{h,\alpha} g(\kappa_{m_n}) = \pm \lambda = -\Mod_\lambda^{h,\alpha} g(\kappa_{m_{n+1}})$.
Now, without loss of generality let $\Mod_\lambda^{h,\alpha} g(\kappa_{m_1}) = \lambda$.
Then, we obtain $\Mod_\lambda^{h,\alpha} g(\kappa_{m_2-1}) = \lambda$ and $\Mod_\lambda^{h,\alpha} g(\kappa_{m_2}) = -\lambda$.
Additionally, $g^\prime(\kappa_{m_2-1}) \geq p \, \frac{2 \lambda - h}{\alpha}$ with $p = |\{\kappa_k \in (\kappa_{m_2-1} - \alpha, \kappa_{m_2-1}  )\}|$.
Therefore, we can estimate $\zeta_{m_2-1}(t)$ for $t > \kappa_{m_2-1}$ as
\begin{align*}
&\zeta_{m_2-1}(t) \geq \zeta_{m_2-1}(\kappa_{m_2-1}) + g^\prime(\kappa_{m_2-1}) \,  (t - \kappa_{m_2-1}) \\ &\qquad - \frac{\|g^{\prime\prime}\|_\infty}{2} \, (t - \kappa_{m_2 -1})^2 - (p+1) \, \frac{2 \lambda - h}{\alpha} \, (t - \kappa_{m_2-1}) \\
&\quad\geq  \lambda  - \frac{h}{\alpha^2} \, (t - \kappa_{m_2-1})^2 - \frac{2 \lambda - h}{\alpha} \, (t - \kappa_{m_2-1}).
\end{align*}
With this we can conclude that
\begin{align*}
\kappa_{m_2} &\geq \inf\Bigl\{t > \kappa_{m_2-1} \Bigm| \lambda - \frac{h}{\alpha^2} \, (t - \kappa_{m_2-1})^2 \\ & \qquad \qquad - \frac{2 \lambda - h}{\alpha} \, (t - \kappa_{m_2-1}) = -\lambda\Bigr\} \\
&= \kappa_{m_2-1} + \alpha.
\end{align*}
We now assume that we have $\kappa_{m_n} \geq \kappa_{m_n-1} + \alpha$ for some $n \geq 2$.
Again without loss of generality let $\Mod_\lambda^{h,\alpha} g(\kappa_{m_n}) = \lambda$.
Since $\kappa_{m_{n+1}-1} \geq \kappa_{m_n} > \kappa_{m_n-1} + \alpha $ we can proceed with the same arguments as before to obtain
\begin{align*}
\kappa_{m_{n+1}} &\geq \inf\Bigl\{t>\kappa_{m_{n+1}-1} \Bigm| \lambda - \frac{h}{\alpha^2} \, (t - \kappa_{m_{n+1}-1})^2 \\ & \qquad \qquad  - \frac{2 \lambda - h}{\alpha} \, (t - \kappa_{m_{n+1}-1}) = -  \lambda\} \\
&= \kappa_{m_{n+1}-1} + \alpha,
\end{align*}
which completes the proof.
\end{proof}

\begin{algorithm}[t]
\setstretch{1.1}
\caption{(OMP)}
\label{alg:omp}
\begin{algorithmic}[1]
\Require Signal $\bfs \in \C^M$ and dictionary matrix $\bfV \in \C^{M \times N}$, error tolerance $\varepsilon > 0$
\medskip
\State $\bfc^{(0)} = 0 \in \C^N$, $\S^{(0)} = \emptyset$, $i = 1$
\While{$\lVert\bfV^\ast(\bfs - \bfV \bfc^{(i-1)})\rVert_\infty > \varepsilon $}
\smallskip
\State $j^{(i)} = \argmax_{1 \leq j \leq N} \lvert [\bfV^\ast(\bfs - \bfV \bfc^{(i-1)})]_j \rvert$
\State $\S^{(i)} = \S^{(i-1)} \cup \{j^{(i)}\}$
\State $\bfc^{(i)} = \argmin_{\bfc} \left\{\lVert\bfs - \bfV \bfc\rVert_2 \mid \supp(\bfc) \subseteq \S^{(i)}\right\}$
\State $i = i+1$
\smallskip
\EndWhile
\medskip
\Ensure $\bfc^{(i_{\text{end}})} \in \C^N$
\end{algorithmic}
\end{algorithm}

With the above condition on the second derivative on $g$ we can characterize the behaviour of $\Mod_\lambda^{h,\alpha} g(t)$ for large $t > |\tau_0|$.

\begin{lemma}
\label{lem:return}
If $g \in \Cont^{1,1}_{\lambda-h,\tau_0}$ with $\|g^{\prime\prime}\|_\infty \leq \frac{2 h}{\alpha^2}$ and $0 \leq h < \lambda$, we have $\Mod_\lambda^{h,\alpha} g(t) = g(t)$ for all $t \geq |\tau_0| + 2 \alpha$.
\end{lemma}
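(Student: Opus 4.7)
The plan is to show that every folding point $\kappa_i$ lies in $[\tau_0,|\tau_0|+\alpha]$; then for $t\geq|\tau_0|+2\alpha$ we have $\varepsilon_\alpha(t-\kappa_i)=1$ for every~$i$, so $\Mod_\lambda^{h,\alpha}g(t)=g(t)-(2\lambda-h)\,S$ with $S=\sum_i s_i\in\Z$, and combining the bound $|\Mod_\lambda^{h,\alpha}g(t)|\leq\lambda$ stated before the lemma with $|g(t)|<\lambda-h$ yields $|S|<1$ and hence $S=0$, giving $\Mod_\lambda^{h,\alpha}g(t)=g(t)$.

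To localize the folding points, I would first rule out any ``isolated'' fold past $|\tau_0|$. If $\kappa_k>|\tau_0|$ with $\kappa_k-\kappa_{k-1}\geq\alpha$, then every earlier transient has completed at $\kappa_k^-$, so $\zeta_{k-1}(\kappa_k)=g(\kappa_k)-(2\lambda-h)\,S_{k-1}$ with $S_{k-1}=\sum_{i<k}s_i\in\Z$. The fold condition $|\zeta_{k-1}(\kappa_k)|=\lambda$ combined with the strict bound $|g(\kappa_k)|<\lambda-h$ then forces $S_{k-1}$ into $(-1,-h/(2\lambda-h))$ or $(h/(2\lambda-h),1)$, neither of which contains an integer. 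Consequently every fold $\kappa_k>|\tau_0|$ satisfies $\kappa_k-\kappa_{k-1}<\alpha$, and the contrapositive of Lemma~\ref{lem:separation} gives $s_k=s_{k-1}$. Iterating backwards produces a maximal same-sign chain $\kappa_{j+1}<\cdots<\kappa_{j+m}$ past $|\tau_0|$ with $\kappa_j\leq|\tau_0|$ and consecutive gaps strictly below~$\alpha$; since $\kappa_{j+1}<\kappa_j+\alpha\leq|\tau_0|+\alpha$, what remains is to exclude the case $m\geq 2$.

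To rule out $m\geq 2$, I would adapt the Taylor-expansion argument from Lemma~\ref{lem:separation}'s proof: the transient initiated at $\kappa_{k-1}$ contributes a downward pull of rate $(2\lambda-h)/\alpha$ to $\zeta_{k-1}$, while $\|g''\|_\infty\leq 2h/\alpha^2$ limits the compensating rise of $g$ on $[\kappa_{k-1},\kappa_k]$. Together with the constraints that $\sum_{i<k-1}s_i\,\varepsilon_\alpha(\kappa_{k-1}-\kappa_i)$ and $\sum_{i<k}s_i\,\varepsilon_\alpha(\kappa_k-\kappa_i)$ both lie in $(-1,-h/(2\lambda-h))$ for a positive chain (symmetrically for negative), this should preclude $\zeta_{k-1}$ from returning to the same value $\pm\lambda$ within a time span less than $\alpha$. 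This last step is the main obstacle: Lemma~\ref{lem:separation}'s one-step Taylor bound treats only opposite-sign folds, whereas here the estimate must be extended to same-sign consecutive folds while simultaneously accounting for the possibly overlapping transient of the pre-chain fold~$\kappa_j$, which makes the bookkeeping noticeably more delicate than in the base lemma.
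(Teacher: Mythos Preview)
Your overall plan---localize every fold to $[\tau_0,|\tau_0|+\alpha]$ and then run the integer argument---parallels the paper's structure, and your exclusion of ``isolated'' folds past $|\tau_0|$ is essentially the paper's Cases~1--3. The genuine gap is the step you yourself flag: ruling out $m\geq 2$. Your proposed route, adapting the Taylor estimate of Lemma~\ref{lem:separation}, is not what the paper does and is indeed awkward here. Lemma~\ref{lem:separation} bounds from below the time $\zeta$ needs to travel from $+\lambda$ down to $-\lambda$; for same-sign folds you would instead need to preclude $\zeta$ from returning \emph{upward} to $+\lambda$ in time $<\alpha$, and there is no readily available upper bound on $g'$ at $\kappa_{k-1}$ to plug into Taylor. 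The partial-sum constraints you mention are correct but do not by themselves close the argument.

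The paper's Case~4 handles this without any derivative estimate. With $\kappa_n\leq|\tau_0|$ and $\kappa_{n+1},\kappa_{n+2}$ both within $\alpha$ of their predecessors (hence same sign, say $+1$, by Lemma~\ref{lem:separation}), it evaluates $\Mod_\lambda^{h,\alpha}g$ at the single point $\kappa_{n+2}+\alpha$, where the three transients from $\kappa_n,\kappa_{n+1},\kappa_{n+2}$ have all completed. Any further folds in $(\kappa_{n+2},\kappa_{n+2}+\alpha)$ are again same-sign, so $\Mod_\lambda^{h,\alpha}g(\kappa_{n+2}+\alpha)\leq\zeta_{n+2}(\kappa_{n+2}+\alpha)$. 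Writing $\zeta_{n+2}(\kappa_{n+2}+\alpha)=\zeta_{n+2}(\kappa_n)+\bigl[\zeta_{n+2}(\kappa_{n+2}+\alpha)-\zeta_{n+2}(\kappa_n)\bigr]$ with $\zeta_{n+2}(\kappa_n)=\lambda$, the bracket is at most $g(\kappa_{n+2}+\alpha)-g(\kappa_n)-3(2\lambda-h)$, and the bound $|g|<\lambda-h$ beyond $|\tau_0|$ then forces $\Mod_\lambda^{h,\alpha}g(\kappa_{n+2}+\alpha)<-3\lambda+h<-\lambda$, contradicting the range property $|\Mod_\lambda^{h,\alpha}g|\leq\lambda$. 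So the missing ingredient is not a refined Taylor bound but a second appeal to the range property, this time at a point where three full jumps of size $2\lambda-h$ have accumulated.
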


\begin{proof}
Let $\kappa_{n}$ be the largest folding point smaller than or equal to $|\tau_0|$.
We now go through the different possible cases.

\underline{Case 1:}
If $\kappa_n + \alpha \leq |\tau_0|$, there exists $k \in \Z$ such that
\begin{align*}
|\Mod_\lambda^{h,\alpha} g(|\tau_0|)| &= |k \, (2 \lambda - h) + g(|\tau_0|)| \\
&\geq |k| \, (2 \lambda - h) - |g(|\tau_0|)| \\
&> |k| \, (2 \lambda - h) - \lambda + h.
\end{align*}
As $|\Mod_\lambda^{h,\alpha} g(|\tau_0|)| \leq \lambda$, we get $k = 0$, which implies that $\Mod_\lambda^{h,\alpha} g(|\tau_0|) = g(|\tau_0|)$ and, thus, $\zeta_n(t) = g(t)$ for all $t \geq |\tau_0|$.
Inserting this into the definition of $\kappa_{n+1}$ gives	
\begin{equation*}
\kappa_{n+1} = \inf\{t > |\tau_0| \mid |g(t)| \geq \lambda\} = \infty,
\end{equation*}
from which follows that $\Mod_\lambda^{h,\alpha} g(t) = g(t) $ for all $t \geq |\tau_0|$.
As $\kappa_n \leq |\tau_0|$, this is the only case for $\alpha = 0$.
So now let $\alpha > 0$.

\underline{Case 2:}
If $\kappa_n + \alpha > |\tau_0|$, but $\kappa_n + \alpha \leq  \kappa_{n+1}$, we get, for some $k \in \Z$,
\begin{align*}
|\Mod_\lambda^{h,\alpha} g(\kappa_n + \alpha)| &= |k \, (2 \lambda - h) + g(\kappa_n + \alpha)| \\
&> |k| \, (2 \lambda - h) - \lambda + h
\end{align*}
so that $\kappa_{n+1} = \infty$ and $\Mod_\lambda^{h,\alpha} g(t) = g(t)$ for all $t \geq \kappa_n + \alpha$.
Since $\kappa_n \leq |\tau_0|$, this also holds for all $t \geq |\tau_0| + \alpha$.

\underline{Case 3:}
If $\kappa_n + \alpha > \max\{|\tau_0|, \kappa_{n+1}\}$, but $\kappa_{n+1} + \alpha \leq  \kappa_{n+2}$, we get, for some $k \in \Z$,
\begin{align*}
|\Mod_\lambda^{h,\alpha} g(\kappa_{n+1} + \alpha)| &= |k \, (2 \lambda - h) + g(\kappa_{n+1} + \alpha)| \\
&> |k| \, (2 \lambda - h) - \lambda + h
\end{align*}
so that $\kappa_{n+2} = \infty$ and $\Mod_\lambda^{h,\alpha} g(t) = g(t) $ for all $t \geq \kappa_{n+1} + \alpha$.
Since $\kappa_{n+1} \leq |\tau_0| + \alpha$, this also holds for all $t \geq |\tau_0| + 2\alpha$.

\underline{Case 4:}
Let $\kappa_n + \alpha > \max\{|\tau_0|, \kappa_{n+1}\}$, $\kappa_{n+1} + \alpha >  \kappa_{n+2}$.
As above, without loss of generality let $\Mod_\lambda^{h,\alpha} g( \kappa_{n}) = \lambda$.
Lemma \ref{lem:separation} implies that $\lambda = \Mod_\lambda^{h,\alpha} g(\kappa_{n+1}) = \Mod_\lambda^{h,\alpha} g(\kappa_{n+2})$ and $\Mod_\lambda^{h,\alpha} g(\kappa_{n+2} + \alpha) \leq \zeta_{n+2}(\kappa_{n+2} + \alpha)$.
This gives us that
\begin{align*}
& \Mod_\lambda^{h,\alpha} g( \kappa_{n+2} + \alpha) \leq \lambda + \zeta_{n+2}( \kappa_{n+2} + \alpha)  - \zeta_{n+2}( \kappa_{n} ) \\
&\quad \leq \lambda + g( \kappa_{n+2} + \alpha)  - g( \kappa_{n} ) - 3 ( 2 \lambda-h) \\
&\quad < \lambda + 2 (\lambda - h)- 3 (2 \lambda - h) = -3\lambda + h < -2\lambda
\end{align*}
in contradiction to $ \Mod_\lambda^{h,\alpha} g(t) \in [-\lambda,\lambda]$.
\end{proof}

We can now derive a condition for the sampling rate $\T$ to uniquely determine $g \in \PW_\Omega$ with bandwidth $\Omega > 0$ from modulo hysteresis samples.
In \cite[Lemma 1]{Bhandari2020}, it is shown that any function $g \in \PW_\Omega$ is uniquely determined by the samples $\{g(k\T_\varepsilon) \mid k \in \Z \setminus J\}$ if $ 0 < \T_{\varepsilon} \leq \frac{\pi}{\Omega+ \varepsilon}$ for arbitrarily small $\varepsilon > 0$ and any finite set $J \subset \Z$.
We can apply this to show that if $\T < \frac{\pi}{\Omega}$, $g$ is also uniquely determined by the samples $\{\A g(k\T) \mid k \in \Z\}$ for a general operator $\A: \PW_\Omega \to \Lebesgue^2(\R)$ with
\begin{equation*}
\A g(t) = g(t)
\quad \forall \, t \in \R \setminus M
\end{equation*}
 for some bounded set $M \subset \R$. 

\begin{lemma}
Any $g \in \PW_\Omega$ is determined by $\{\A g(k\T) \mid k \in \Z\}$ if the oversampling condition $\T < \frac{\pi}{\Omega}$ is met.
\end{lemma}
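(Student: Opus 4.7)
The plan is to reduce the claim to the already-cited \cite[Lemma 1]{Bhandari2020}, which states that removing finitely many samples from a slightly-oversampled set does not destroy identifiability in $\PW_\Omega$. The operator $\A$ only disturbs $g$ on the bounded set $M$, so the modulo samples agree with the true samples of $g$ except at finitely many integers $k$, which is exactly the setting handled by the cited lemma.

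First I would set $J = \{k \in \Z \mid k\T \in M\}$. Since $M \subset \R$ is bounded and $\{k\T \mid k \in \Z\}$ is a uniformly discrete set, $J$ is finite. For every $k \in \Z \setminus J$ the assumption $\A g(t) = g(t)$ on $\R \setminus M$ yields $\A g(k\T) = g(k\T)$, so knowledge of the sequence $\{\A g(k\T) \mid k \in \Z\}$ immediately furnishes the true Shannon samples $\{g(k\T) \mid k \in \Z \setminus J\}$.

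Next I would convert the strict oversampling condition $\T < \frac{\pi}{\Omega}$ into the form required by \cite[Lemma 1]{Bhandari2020}. Choosing any $\varepsilon$ with $0 < \varepsilon \leq \frac{\pi}{\T} - \Omega$ gives $\T \leq \frac{\pi}{\Omega + \varepsilon}$, so $\T$ qualifies as a valid $\T_\varepsilon$ in the cited statement. Applying that lemma with this $\varepsilon$ and the finite set $J$ above shows that the samples $\{g(k\T) \mid k \in \Z \setminus J\}$ already determine $g \in \PW_\Omega$ uniquely, which completes the argument.

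There is essentially no hard step here beyond invoking the cited result; the only thing to be careful about is that the sampling rate is strictly less than $\frac{\pi}{\Omega}$ (rather than $\leq$), since one needs room to absorb the finite exception set $J$ via the $\varepsilon$-margin in \cite[Lemma 1]{Bhandari2020}. The overall argument is a short three-line reduction: identify the finite corrupted index set, recover the unaffected true samples, and apply the known slightly-oversampled interpolation result.
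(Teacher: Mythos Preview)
Your proposal is correct and follows essentially the same argument as the paper's own proof: choose $\varepsilon>0$ with $\T \leq \frac{\pi}{\Omega+\varepsilon}$, identify the finite index set of corrupted samples, and invoke \cite[Lemma~1]{Bhandari2020}. Your definition $J = \{k \in \Z \mid k\T \in M\}$ is in fact slightly more precise than the paper's shorthand $J = M \cap \Z$.
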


\begin{proof}
Since $\T < \frac{\pi}{\Omega}$, there exists $\varepsilon > 0$ so that $\T \leq \frac{\pi}{\Omega + \varepsilon}$.
Now define $J = M \cap \Z$, which is finite as $M \subset \R$ is bounded.
Since $\{\A g(k\T) \mid k \in \Z \setminus J\} = \{g(k\T) \mid k \in \Z \setminus J\}$ by assumption on $\A$, the statement follows from \cite[Lemma 1]{Bhandari2020}.
\end{proof}

Since the modulo hysteresis operator $\Mod_\lambda^{h,\alpha}$ satisfies the above conditions on $\A$ if we restrict ourselves to functions $g \in \PW_\Omega \cap \Cont^{1,1}_{\lambda-h,\tau_0}$ with $\|g^{\prime\prime}\|_\infty \leq \frac{2 h}{\alpha^2}$ and $0 \leq h < \lambda$ due to Lemma~\ref{lem:return}, we get the following identifiability result for $\Mod_\lambda^{h,\alpha}$.

\begin{corollary}
Let $0 \leq h < \lambda$.
Then, any $g \in \PW_\Omega \cap \Cont^{1,1}_{\lambda-h,\tau_0}$ with $\|g^{\prime\prime}\|_\infty \leq \frac{2 h}{\alpha^2}$ is uniquely determined by the modulo hysteresis samples $\{\Mod_\lambda^{h,\alpha} g(k\T) \mid k \in \Z\}$ if $\T < \frac{\pi}{\Omega}$.\qed
\end{corollary}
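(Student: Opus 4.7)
The plan is to obtain the corollary as a direct composition of two facts that have just been proved: Lemma~\ref{lem:return}, which forces the output of $\Mod_\lambda^{h,\alpha}$ to coincide with the input after a bounded transient, and the preceding abstract identifiability lemma for operators that agree with the identity outside a bounded set. The corollary then reduces to checking that, on the input class $\PW_\Omega \cap \Cont^{1,1}_{\lambda-h,\tau_0}$ with $\|g''\|_\infty \leq 2h/\alpha^2$, the operator $\A = \Mod_\lambda^{h,\alpha}$ fits the hypotheses of that lemma.

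Concretely I would argue in three short steps. First, invoke Lemma~\ref{lem:return} to conclude $\Mod_\lambda^{h,\alpha} g(t) = g(t)$ for every $t \geq |\tau_0| + 2\alpha$. Second, handle the symmetric left tail by inspecting the definition of $\Mod_\lambda^{h,\alpha}$: since $\kappa_0 = \tau_0$ and $\kappa_{n+1} > \kappa_n$, every folding point lies strictly to the right of $\tau_0$, and because $\varepsilon_\alpha$ is supported in $[0,\infty)$, each correction $-(2\lambda - h)\sgn(\zeta_n(\kappa_{n+1}))\varepsilon_\alpha(\cdot - \kappa_{n+1})$ vanishes on $(-\infty, \tau_0]$. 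Therefore $\zeta_n \equiv g$ on $(-\infty, \tau_0]$ for every $n$, and the limit $\Mod_\lambda^{h,\alpha} g$ inherits this property. Third, set $M = (\tau_0,\, |\tau_0| + 2\alpha)$; this is a bounded subset of $\R$ on whose complement $\Mod_\lambda^{h,\alpha} g$ agrees with $g$, so the preceding lemma applies with $\A = \Mod_\lambda^{h,\alpha}$ and the oversampling condition $\T < \pi/\Omega$ delivers the claim.

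There is no real obstacle: the analytic content has already been absorbed into Lemma~\ref{lem:separation} and Lemma~\ref{lem:return}, and the measure-theoretic content into the generic identifiability lemma. The only point that needs an explicit sentence of justification is the left-tail agreement in step two, because Lemma~\ref{lem:return} is phrased only for $t \geq |\tau_0| + 2\alpha$; once that trivial observation is recorded, the corollary reduces to citing the two previous results.
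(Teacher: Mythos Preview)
Your proposal is correct and matches the paper's approach exactly: the paper states the corollary with a \qed and no separate proof, preceded only by the sentence that $\Mod_\lambda^{h,\alpha}$ satisfies the hypotheses on $\A$ by Lemma~\ref{lem:return}. Your explicit left-tail argument (that all $\kappa_n > \tau_0$ and $\varepsilon_\alpha$ is supported in $[0,\infty)$, so $\zeta_n \equiv g$ on $(-\infty,\tau_0]$) is a welcome addition that the paper leaves implicit.
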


\begin{algorithm}[t]
\setstretch{1.1}
\caption{(SAOMP)}
\label{alg:saomp}
\begin{algorithmic}[1]
\Require Signal $\bfs \in \C^M$ and dictionary matrix $\bfV \in \C^{M \times N}$, error tolerance $\varepsilon > 0$, initial threshold $\nu \in [0,1]$, pruning threshold $\mu \in [0,1]$, maximal iteration number $i_{\text{max}} \in \N $
\medskip
\State $\bfc^{(0)} = 0 \in \C^N$, $\S^{(0)} = \emptyset$, $\delta = \nu$, $i = 1$
\While{$i \leq i_{\text{max}}$ \textbf{and} $\lVert\bfV^\ast(\bfs - \bfV \bfc^{(i-1)})\rVert_\infty > \varepsilon $}
\smallskip
\State $\bfr^{(i)} = \bfV^\ast(\bfs - \bfV \bfc^{(i-1)})$
\State $\S^{(i)} = \S^{(i-1)} \cup \{j \in \{1,\ldots,N\} \mid \lvert \bfr^{(i)}_j \rvert \geq \delta \, \lVert\bfr^{(i)}\rVert_\infty\}$
\State $\bfc^{(i)} = \argmin_{\bfc} \{\lVert\bfs - \bfV \bfc\rVert_2 \mid \supp(\bfc) \subseteq \S^{(i)}\}$
\State $\S^{(i)} = \{j \in \{1,\ldots,N\} \mid \lvert \bfc_j^{(i)} \rvert \geq \mu \, \lVert\bfc^{(i)}\rVert_\infty\}$
\State \textbf{for} $j \not\in \S^{(i)}$: $\bfc_j^{(i)} = 0$
\State $\delta = \delta + \frac{1-\nu}{i_{\text{max}}}$, $i = i+1$
\smallskip
\EndWhile
\medskip
\Ensure $\bfc^{(i_{\text{end}})} \in \C^N$
\end{algorithmic}
\end{algorithm}

\section{Reconstruction via OMP}\label{sec:OMP}

For our reconstruction approach we assume that we are given $2K + 1$ modulo hysteresis samples of $g \in \PW_\Omega \cap \Cont^{0,1}_{\lambda,\tau_0}$,
\begin{equation*}
\{\Mod_\lambda^{h,\alpha}g(k\T) \mid k=-K,\ldots,K\}
\end{equation*}
with sampling rate $0<\T<\frac{\pi}{\Omega}$, where $K \in \N$ is large enough such that $|g(t)| < \lambda$ for all $|t| \geq K\T$.
Let us first consider the folding function $\varepsilon_\alpha(\cdot - \kappa)$ located at fixed $-K\T < \kappa < K\T$ and set $\varepsilon_\alpha^{(\kappa)}[n] = \varepsilon_\alpha((n-K)\T - \kappa)$ for $n = 0,\ldots,N$ with $N = 2K$.
Let ${\Delta: \R^{N+1} \to \R^N}$ denote the forward difference operator defined by ${\Delta z[k] = z[k+1]-z[k]}$.
Then, there are at most $L = \lceil\frac{\alpha}{\T}\rceil + 1$ entries, where $\Delta \varepsilon_\alpha^{(\kappa)}$ is nonzero, so that $\Delta \varepsilon_\alpha^{(\kappa)}[n] = \Delta  \varepsilon_0^{(\tilde{\kappa}_1)}[n] + \ldots + \Delta \varepsilon_0^{(\tilde{\kappa}_L)}[n]$ for suitable $\tilde{\kappa}_1, \ldots, \tilde{\kappa}_L \in \T\Z$.
Intuitively, one linear fold is split into at most $L$ instantaneous folds.
Based on this observation we can adapt the recovery approach proposed in~\cite{Beckmann2022a}.

\begin{figure}[t]
\centering
\includegraphics[width=\linewidth]{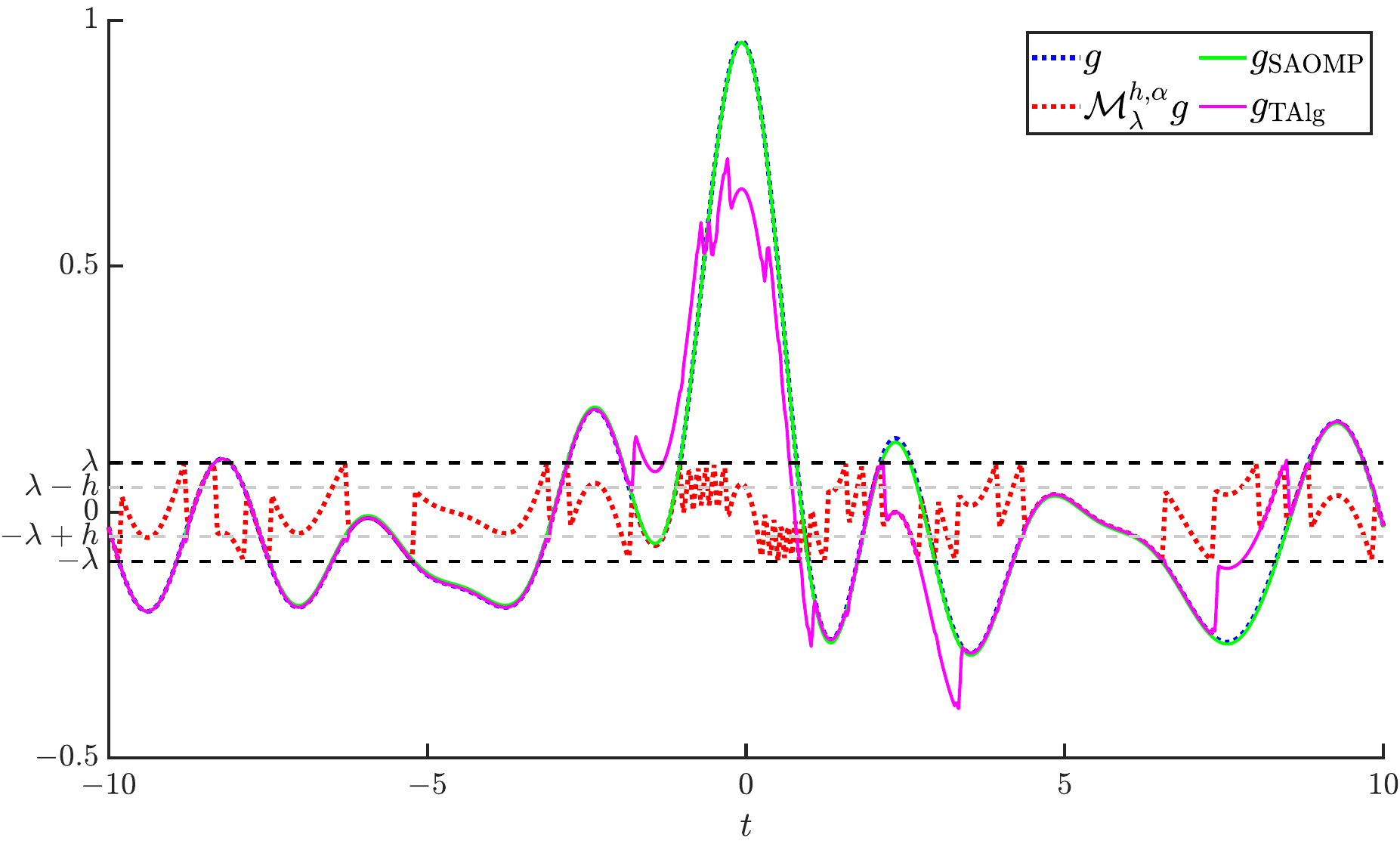}
\caption{Illustration of SAOMP and TAlg reconstruction for random $g \in \PW_\Omega$ with $\Omega = 6.3 \frac{\text{rad}}{\text{s}}$ based on modulo hysteresis samples $\Mod_\lambda^{h,\alpha} g(n\T)$ with $\lambda = 0.1$, $h = 0.05$, $\alpha =  50 \text{ms}$ and $\T = 20.8 \text{ms}$.}
\label{fig:SAOMP_example}
\end{figure}

To this end, set $g[n] = g((n-K)\T)$, for $n = 0,\ldots,N$, and $g_\lambda[n] = \Mod_\lambda^{h,\alpha} g((n-K)\T)$.
Then, there exist $L_\lambda \in \N$, $\kappa_1,\ldots,\kappa_{L_\lambda} \in [-K,K]\T$ and $\sigma_1,\ldots,\sigma_{L_\lambda} \in \{\pm1\}$ such that $g_\lambda[n] = g[n] - s_\lambda[n]$ with $s_\lambda[n] = (2\lambda-h) \sum_{l=1}^{L_\lambda} \sigma_l \, \varepsilon_\alpha^{(\kappa_l)}[n]$.
We apply $\Delta$ to get $\underline{g}_\lambda[n] = \Delta g_\lambda[n]$ for $n = 0,\ldots,N-1$ and, thereon, calculate its discrete Fourier transform (DFT) $\hat{\underline{g}}_\lambda[m] = \sum_{n=0}^{N-1} \underline{g}_\lambda[n] \exp(-\frac{2\pi\i mn}{N})$ for $m = 0,\ldots,N-1$.
Based on the above observation, the signal $\widehat{\underline{s}}_\lambda$ can be written as
\begin{equation*}
\widehat{\underline{s}}_\lambda[m] = \sum\nolimits_{\ell \in \Ell_\lambda} c_\ell \exp\Bigl(-\i \frac{\underline{\omega}_0 m}{\T}t_\ell\Bigr)
\end{equation*}
with $\underline{\omega}_0 = \frac{2\pi}{N}$, $\Ell_\lambda \subseteq \{0,\dots,N\}$ and $t_\ell \in (\T\Z) \cap [0,N\T]$, where $|\Ell_\lambda| \leq L_\lambda \, (\lceil\frac{\alpha}{\T}\rceil + 1)$.
Now, the bandlimitedness of $g$ gives
\begin{equation*}
\hat{\underline{g}}_\lambda[m] = -\hat{\underline{s}}_\lambda[m]
\quad \text{for } m \in \E_{N_\Omega,N}^\complement
\end{equation*}
with indices $\E_{N_\Omega,N}^\complement = \{N_\Omega+1,\ldots,N-N_\Omega-1\}$ and effective bandwidth $N_\Omega = \lceil\frac{\Omega(N+1)\T}{2\pi}\rceil$.
Defining the vector $\bfs \in \C^M$ for $M = N - 2 N_\Omega - 1$ via $\bfs_{m-N_\Omega} =  \hat{\underline{g}}_\lambda[m]$ for $m \in \E_{N_\Omega,N}^\complement$, we can find $\bfc = (c_n)_{n = 0}^{N-1} \in \C^{N}$ with $c_n = 0$ for $n \not\in \Ell_\lambda$ by solving the minimization problem 
\begin{equation}\label{eq:MinProb}
\text{minimize } \|\bfc\|_0
\quad \text{subject to}
\quad \bfV \bfc = \bfs,
\end{equation}
where $\bfV \in \C^{M \times N}$ is a Vandermonde matrix with entries $\bfV_{m-N_\Omega,n+1} = \e^{-\i \underline{\omega}_0 mn}$ for $m \in \E_{N_\Omega,N}^\complement$, $n = 0,\ldots,N-1$.
As explained in~\cite{Beckmann2022a}, \eqref{eq:MinProb} can be efficiently solved via the orthogonal matching pursuit (OMP) algorithm~\cite{Mallat1993}, see Algorithm~\ref{alg:omp}.
Applying anti-difference operator $\Sum: \R^N \to \R^{N+1}$, defined by $\Sum z[k] = \sum_{j < k} z[j]$, and subtracting $\Sum\bfc$ from $\bfg_\lambda = (g_\lambda[n])_{n = 0}^N$ finally recovers the samples $\bfg = (g[n])_{n = 0}^N$.

In~\cite{Beckmann2024}, a recovery guarantee for solving \eqref{eq:MinProb} using OMP is shown if the index of the maximal nonzero entry of $\bfc \in \C^N$ satisfies $\max \{n \mid \bfc_n \neq 0\} \leq M-1$.
By means of Lemma~\ref{lem:return} we know that this is satisfied if $g \in \Cont^{1,1}_{\lambda-h,\tau_0}$ with $\|g^{\prime\prime}\|_\infty \leq \frac{2 h}{\alpha^2}$ and
\begin{equation*}
\biggl\lceil \frac{|\tau_0|+ 2 \alpha}{\T}\biggr\rceil + K \leq N -2 N_{\Omega} - 2.	
\end{equation*}

To accelerate the recovery, we replace OMP by the so-called stagewise arithmetic orthogonal matching pursuit (SAOMP) algorithm proposed in~\cite{Zhang2018}.
It has additional parameters $\nu$ to find multiple folding points in one iteration and $\mu$ to remove incorrectly selected ones, see Algorithm~\ref{alg:saomp}.
Note that SAOMP agrees with OMP when choosing $\nu = 1$, $\mu = 0$ and $i_{\text{max}} = N$.

\section{Numerical Experiments}\label{sec:numerics}
	
In our numerical experiments we randomly select a function $g \in \PW_\Omega$ with $\Omega = 6.3$ and approximate its modulo hysteresis output $\Mod_\lambda^{h,\alpha}g$ numerically.
Thereon, we reconstruct $g$ from its samples $\Mod_\lambda^{h,\alpha}g(n\T)$ using our SAOMP approach and compare our results with the thresholding algorithm (TAlg) proposed in~\cite{Florescu2022}.
One example is shown in Fig.~\ref{fig:SAOMP_example}, where we choose $\lambda = 0.1$, $h = 0.05$, $\alpha = 0.05$ and $\T = 0.0208$.

While recovery with TAlg performs better for $\alpha \ll 1$ very close to zero, TAlg fails for larger $\alpha$, whereas SAOMP still recovers, see Fig.~\ref{fig:SAOMP_example}.
For a more detailed analysis, we investigate the influence of $\alpha$ and $h$ on the reconstruction success of our SAOMP approach.
To this end, we set $\lambda = 0.1$ and vary $\alpha$ between $0$ and $0.07$ and $h$ between $0$ and $0.1$.
We then calculate the mean squared error (MSE) for the SAOMP reconstructions of $50$ randomly chosen functions.
The results are depicted in Fig.~\ref{fig:SAOMP_alpha_h}, where the colors indicate for how many functions the MSE of SAOMP is larger than $0.001$.

We observe that SAOMP gives satisfactory reconstructions for a large range of parameters.
However, our approach shows instabilities for a large number of folds, in particular if $\alpha$ is much larger than $\T$.
In this case, a modified dictionary matrix $\bfV$ can reduce the number of non-zero elements in the coefficient vector $\bfc$ and allows for better reconstruction results.
This approach, however, is beyond the scope of this paper and calls for more in-depth research in the future.

\begin{figure}[t]
\centering
\includegraphics[height=4.5cm]{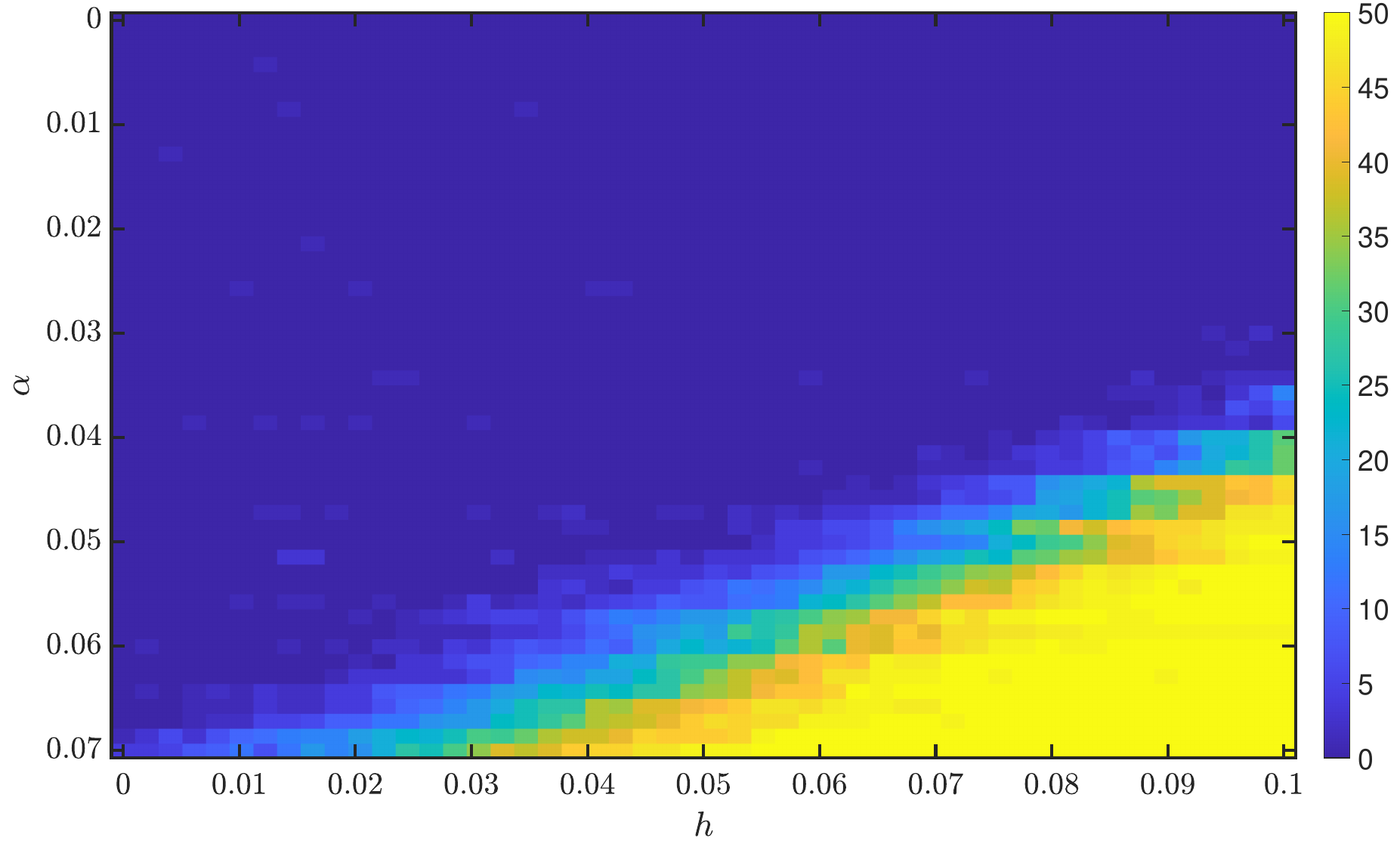}
\caption{Success of SAOMP reconstruction for $\Mod_\lambda^{h,\alpha}g(n\T)$ with $\lambda = 0.1$, $\T = 20.8 \text{ms}$ and different values for $\alpha$ and $h$ showing how often the MSE is larger than $0.001$ for $50$ random functions $g \in \PW_\Omega$ with $\Omega = 6.3 \frac{\text{rad}}{\text{s}}$.}
\label{fig:SAOMP_alpha_h}
\end{figure}


\vfill\pagebreak

\clearpage
\balance

\end{document}